\documentclass[12pt]{elsarticle}
\usepackage{amsthm,amsmath,amstext,amssymb,
mathrsfs,extsizes,MnSymbol}

\usepackage[all]{xy}

\usepackage[active]{srcltx}
\sloppy

 \newtheorem{theorem}{Theorem}
 
 \newtheorem{corollary}{Corollary}

\theoremstyle{definition}
\newtheorem{definition}{Definition}

\theoremstyle{remark}
\newtheorem{remark}{Remark}

\renewcommand{\le}{\leqslant}
\renewcommand{\ge}{\geqslant}

\DeclareMathOperator{\sign}{sign}

\newcommand{\mat}[1]{\begin{bmatrix}
 #1  \end{bmatrix}}

\newcommand{\mc}{\mathcal}
\newcommand{\ff}{\mathbb F}
\newcommand{\pp}{\mathbb P}
\newcommand{\cc}{\mathbb C}
\newcommand{\rr}{\mathbb R}

\newcommand{\un}{\underline}

\newcommand{\st}{\upY}

\begin{document}

\title{Congruence of matrix spaces, matrix tuples, and multilinear maps\footnote{G.R. Belitskii, V. Futorny, M. Muzychuk, V.V. Sergeichuk,l Congruence of matrix spaces, matrix tuples, and multilinear maps, Linear Algebra Appl. 609 (2021) 317-331}}

\author[bel]{Genrich R. Belitskii}
\address[bel]{Department of Mathematics,
Ben-Gurion University of the Negev,
Beer-Sheva, Israel}
\ead{genrich@cs.bgu.ac.il}

\author[fut]{Vyacheslav Futorny}
\ead{futorny@ime.usp.br}
\address[fut]{Department of Mathematics, University of S\~ao Paulo, Brazil}

\author[bel]{Mikhail Muzychuk}
\ead{misha.muzychuk@gmail.com}

\author[ser]{Vladimir~V.~Sergeichuk\corref{cor}}
\ead{sergeich@imath.kiev.ua}
\address[ser]{Institute of Mathematics, Tereshchenkivska 3,
Kiev, Ukraine}

\cortext[cor]{Corresponding author.}

\begin{abstract}
Two matrix vector spaces $\mc V,\mc W\subset \cc^{n\times n}$ are said to be equivalent if
$S\mc VR=\mc W$ for some nonsingular $S$ and $R$.
These spaces are congruent if $R=S^T$. We prove that if all matrices in $\mc V$ and $\mc W$ are symmetric, or all matrices  in $\mc V$ and $\mc W$  are skew-symmetric, then $\mc V$ and $\mc W$ are congruent if and only if they are equivalent.

Let $\mc F: U\times\dots\times U\to V$ and $\mc G: U'\times\dots\times U'\to V'$ be symmetric or skew-symmetric $k$-linear maps over $\cc$. If there exists a set of linear bijections $\varphi_1,\dots,\varphi_k:U\to U'$ and
$\psi:V\to V'$ that transforms $\mc F$ to $\mc G$, then there exists such a set with $\varphi_1=\dots=\varphi_k$.
\end{abstract}

\begin{keyword}
Congruence; weak congruence; multilinear maps.

\MSC 15A21; 15A63; 15A69
\end{keyword}
\maketitle

\section{Introduction and main results}

If two pairs of complex $n\times n$ matrices $(M,M')$ and $(N,N')$ are congruent, then they are equivalent; that is,
\begin{align*}
\exists &\text{ nonsingular } S:\ S(M,M')S^T:=(SMS^T,SM'S^T)=(N,N')\\
\Longrightarrow\quad
\exists &\text{ nonsingular }  P\text{ and } Q:\ P(M,M')Q=(N,N').
\end{align*}
Mal'cev
writes in \cite[Chapter VI]{mal} that quite unexpectedly the converse is also true under certain conditions:
\begin{equation}
\label{bgm}
\parbox[c]{0.84\textwidth}
{\emph{Let $M$ and $N$ be either both symmetric or both skew-symmetric, and let $M'$ and $N'$ be either both symmetric or both skew-symmetric. If $(M,M')$ and $(N,N')$ are equivalent, then they are congruent.}}
\end{equation}

Since each square complex matrix $A$ can be expressed as the sum of the symmetric matrix $(A+A^T)/2$ and the skew-symmetric matrix$(A-A^T)/2$, the statement \eqref{bgm} implies that
\begin{equation}
\label{bgvm}
\parbox[c]{0.84\textwidth}
{two complex matrices $A$ and $B$ are congruent if and only if $(A,A^T)$ and $(B,B^T)$ are equivalent.}
\end{equation}
This fact admits to derive
a canonical form of complex matrices under congruence from the Kronecker canonical form of matrix pencils; see \cite[Section 2]{h-s1}.
The statement \eqref{bgvm} is extended in \cite{roi,ser} (see also \cite{h-s1,h-s2,ser1})  to arbitrary systems of forms and linear maps.

If $(A,A^T)$ and $(B,B^T)$ are equivalent, then $P(A,A^T)Q=(B,B^T)$ for some nonsingular $P$ and $Q$, and so $(PAQ,Q^TAP^T)=(B,B)$. Taking $R:=Q^T$, we rewrite \eqref{bgvm} as follows: \begin{equation}
\label{bkm}
\parbox[c]{0.84\textwidth}
{two complex matrices $A$ and $B$ are congruent if and only if $PAR^T=RAP^T=B$ for some nonsingular $P$ and $R$.}
\end{equation}

This paper is a continuation of the article \cite{bel-ser_mul}, in which the statement \eqref{bkm} is extended to multilinear forms.
We extend \eqref{bkm} to matrix spaces, matrix tuples, and multilinear maps.
The main results are Theorem \ref{t1h} about congruence of matrix tuples and Theorem \ref{ther} about congruence of $k$-linear maps. In Remark \ref{bvc} we show that Theorem \ref{ther} in the case of bilinear maps (i.e., for $k=2$) follows from Theorem \ref{t1h}.

\subsection{Matrix tuples up to congruence and *congruence}
\label{s1.2a}

Let $\ff$ be a field or skew field with a fixed \emph{involution} $a\mapsto \bar a$; that is, a bijection $\ff\to\ff$   (which can be the identity if $\ff$ is a field)
such that $\overline{a+b}=\bar a+\bar b$,
$\overline{ab}=\bar b\bar a$, and $\bar{\bar
a}=a$ for all $a,b\in\ff$. If $A$ is a matrix over $\ff$, then $A^*:=\bar A^T$.

\begin{definition}\label{nhj}
Two $t$-tuples
\[\mc A=(A_1,\dots,A_t),\qquad \mc B=(B_1,\dots,B_t)\]
of $n\times n$ matrices over $\ff$ are
\begin{itemize}
  \item  \emph{symmetrically equivalent} if
$S\mc AR^T=R\mc AS^T=\mc B$,

  \item   \emph{*symmetrically equivalent} if
$S\mc AR^*=R\mc AS^*=\mc B$,

  \item \emph{congruent} if
$S\mc AS^T=\mc B$, and
\emph{*congruent} if
$S\mc AS^*=\mc B$
\end{itemize}
for some nonsingular $S$ and $R$.
\end{definition}

Recall that a \emph{real closed field} $\pp$ is a field whose algebraic closure is two-dimensional over $\pp$; for example, $\pp=\rr$. The characteristic of each real closed field is zero.

Let $p,q\in\{0,1,2,\dots\}$.
A \emph{$(p,q)$
block diagonal matrix} is a matrix of the form
\[
\mat{A&0\\0&B},\qquad
\begin{matrix}
A\text{ is }p\times p,\\
B\text{ is }q\times q.\\
\end{matrix}
\]

The following theorem generalizes \eqref{bkm}.

\begin{theorem}[proved in Section \ref{s2}]\label{t1h}
Let $\mc A$ and $\mc B$ be two
$t$-tuples
of $n\times n$ matrices over $\mathbb F$.

\begin{itemize}
  \item[\rm(a)]
Let $\ff$ be an algebraically closed field of characteristic different from $2$. Then ${\mc A}$ and ${\mc B}$ are symmetrically equivalent if and only if they are  congruent.

  \item[\rm(b)]
Let $\ff$ be a real closed field. Then ${\mc A}$ and ${\mc B}$ are symmetrically equivalent if and only if there exists a $t$-tuple $\mc C$ of $(p,n-p)$ block diagonal matrices such that $\mc A$ is congruent to $\mc C$ and $(I_p\oplus -I_{n-p})\mc C$
is congruent to ${\mc B}$.

\item[\rm(c)]
Let $\ff$ be an algebraically closed field or the skew field of real quaternions. Let a nonidentity involution on $\ff$ be fixed. Then ${\mc A}$ and ${\mc B}$ are *symmetrically equivalent if and only if
there exists a $t$-tuple $\mc C$ of $(p,n-p)$ block diagonal matrices such that $\mc A$ is *congruent to $\mc C$ and $(I_p\oplus -I_{n-p})\mc C$
is *congruent to ${\mc B}$.
\end{itemize}
\end{theorem}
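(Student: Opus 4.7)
The reverse implications in (a)--(c) are immediate: taking $R:=S$ turns congruence into symmetric equivalence, and in (b) and (c) one sets $p:=n$ and $\mc C:=\mc A$. For the forward direction, I would reduce, via a symmetric/skew splitting, to the classification up to simultaneous congruence of tuples whose entries are individually symmetric or skew-symmetric (Hermitian or skew-Hermitian in (c)). Write
\[
A_i^+:=\tfrac12(A_i+A_i^T),\qquad A_i^-:=\tfrac12(A_i-A_i^T),
\]
and define $B_i^{\pm}$ the same way, replacing $T$ by $*$ throughout in part~(c). The hypotheses $SA_iR^T=RA_iS^T=B_i$, after transposing the second equation and taking sums and differences, yield $SA_i^{\pm}R^T=B_i^{\pm}$. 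Since $A_i^+$ is symmetric and $A_i^-$ is skew-symmetric, transposing these equations automatically gives $RA_i^{\pm}S^T=B_i^{\pm}$. Thus symmetric equivalence of $(\mc A,\mc B)$ is the same as plain equivalence of the $2t$-tuples $(\mc A^+,\mc A^-)$ and $(\mc B^+,\mc B^-)$; conversely, any simultaneous congruence $T(\mc A^+,\mc A^-)T^T=(\mc B^+,\mc B^-)$ recovers $T\mc AT^T=\mc B$ by adding $A_i^++A_i^-=A_i$.

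The substance of the proof is then to upgrade equivalence of $(\mc A^+,\mc A^-)$ and $(\mc B^+,\mc B^-)$ to simultaneous congruence. Over an algebraically closed field of characteristic different from $2$ this is the tuple-level version of Mal'cev's assertion \eqref{bgm} and sits inside the canonical-form theory for systems of bilinear forms and linear maps developed in \cite{roi,ser,h-s1,h-s2,ser1}: the canonical form of a mixed symmetric/skew-symmetric tuple under simultaneous congruence is determined by invariants already read off from the equivalence class, so equivalent such tuples are congruent. Combined with the reduction above, this yields (a).

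For part (b) the same reduction applies, but over a real closed field the canonical form for mixed symmetric/skew-symmetric tuples carries an additional Sylvester-type sign invariant attached to certain indecomposable summands, and equivalence cannot detect these signs. The $(p,n-p)$-block-diagonal interpolating tuple $\mc C$ in the statement packages this ambiguity precisely: $\mc A$ becomes congruent to such a $\mc C$ with a definite signature, and left-multiplying each $C_i$ by $I_p\oplus-I_{n-p}$ flips the signs of the ``negative'' summands and produces a tuple congruent to $\mc B$. Part~(c) proceeds identically, splitting into Hermitian and skew-Hermitian parts and invoking the analogous classification over $\cc$ (with complex conjugation) or over the quaternions (with their standard involution), in which a signature refinement is likewise present.

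The principal obstacle is the final appeal to the mixed-type canonical-form classification: one has to verify that the cited results extend from the uniformly symmetric or uniformly skew-symmetric setting highlighted in the abstract to the genuinely \emph{mixed} setting needed here, and, in (b) and (c), that the residual sign ambiguity over real closed fields and with involutions takes exactly the shape of left-multiplication by $I_p\oplus-I_{n-p}$ on a $(p,n-p)$-block-diagonal representative. The algebraic manipulations in the reduction itself, by contrast, are entirely formal.
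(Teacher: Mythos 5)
Your opening reduction is correct and is a genuinely different packaging from the paper's: splitting $A_i=A_i^++A_i^-$ converts symmetric equivalence of $(\mc A,\mc B)$ into ordinary equivalence of the $2t$-tuples $(\mc A^+,\mc A^-)$ and $(\mc B^+,\mc B^-)$, whose entries are individually symmetric or skew-symmetric (Hermitian or skew-Hermitian in (c)), in the spirit of Mal'cev's statement \eqref{bgm}. The paper does not split at all: it encodes $\mc A$ and $\mc B$ as representations of a graph $G$ with $t$ undirected loops, observes that symmetric equivalence is exactly isomorphism of the associated representations $\un{\mc A},\un{\mc B}$ of the underlying quiver $\un G$ (via $P\mc AQ=\mc B$, $P\mc A^{\st}Q=\mc B^{\st}$, $R:=Q^{\st}$), and then quotes \cite[Theorem~1]{ser}.

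The step you yourself flag as ``the principal obstacle'' is, however, where the entire content of the theorem lives, and you do not supply it. The assertion that equivalent mixed symmetric/skew-symmetric tuples are congruent cannot be obtained by ``reading invariants off the canonical form'': for $t\ge 3$ the congruence classification of such tuples is wild (see \cite{bel-lip}), so no usable canonical form exists and the statement must be proved structurally. The paper gets exactly this from \cite[Theorem~1]{ser} (or \cite[Theorem~3.1]{ser1}): statement (A), that over an algebraically closed field of characteristic $\ne 2$ two representations of a graph are isomorphic if and only if their underlying quiver representations are; and statement (B), the Krull--Schmidt property plus the fact that an indecomposable $\mc A$ with $\un{\mc A}\cong\un{\mc B}$ is isomorphic to $\mc B$ or $-\mc B$, which is precisely what produces the $(p,n-p)$ block-diagonal $\mc C$ and the factor $I_p\oplus -I_{n-p}$ in (b) and (c). Your needed lemma is an instance of these results, so the gap is repairable by a precise citation, but as written the proof has no engine; your reduction, while clean, is also logically unnecessary once one has (A) and (B), since they apply to the unsplit tuple directly. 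Two smaller defects: your disposal of the converse implications in (b) and (c) (``set $p:=n$ and $\mc C:=\mc A$'') addresses the wrong direction --- what must be checked is that the existence of $\mc C$ implies symmetric equivalence, using that $I_p\oplus -I_{n-p}$ is an involution commuting with $(p,n-p)$ block-diagonal matrices; and your sign discussion in (b)--(c) presupposes uniqueness of the decomposition into indecomposables, which you never state.
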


\subsection{Matrix tuples up to weak congruence and *congruence}
\label{s1.2b}

\begin{definition}\label{nmj}
Let $\ff$ be a field.
Two $t$-tuples
$\mc A$ and $\mc B$
of $n\times n$ matrices over $\ff$ are
\emph{weakly symmetrically equivalent}, \emph{weakly *symmetrically equivalent}, \emph{weakly symmetrically congruent}, or \emph{weakly symmetrically *congruent} if there exists a nonsingular $t\times t$ matrix $\Lambda=[\lambda_{ij}]$ over $\mathbb F$ such that $\mc A$ is symmetrically equivalent,  *symmetrically equivalent, symmetrically congruent, or  symmetrically *congruent to the $t$-tuple
\begin{equation}\label{mjg}
\mc C:=(\lambda_{11}B_1
+\dots+\lambda_{1t}B_t,\ \dots,\ \lambda_{t1}B_1
+\dots+\lambda_{tt}B_t).
\end{equation}
\end{definition}

If
$\mc A$ and $\mc B$
are weakly *symmetrically equivalent, then $\mc A$ is *symmetrically equivalent to $\mc C$ of the form \eqref{mjg}. By Theorem \ref{t1h}, $\mc A$ is *congruent to $\mc C$, and so $\mc A$ and $\mc B$
are weakly *congruent, which ensures the following corollary.

\begin{corollary}[of Theorem \ref{t1h}]\label{t1n1}
Let $\mc A$ and $\mc B$ be two
$t$-tuples
of $n\times n$ matrices over $\mathbb F$.

\begin{itemize}
  \item[\rm(a)]
Let $\ff$ be an algebraically closed field of characteristic different from $2$. Then ${\mc A}$ and ${\mc B}$ are weakly  symmetrically equivalent if and only if they are weakly  congruent.

  \item[\rm(b)]
Let $\ff$ be a real closed field. Then ${\mc A}$ and ${\mc B}$ are weakly symmetrically equivalent if and only if there exists a $t$-tuple $\mc C$ of $(p,n-p)$ block diagonal matrices such that $\mc A$ is congruent to $\mc C$ and $(I_p\oplus -I_{n-p})\mc C$
is weakly  congruent to ${\mc B}$.

\item[\rm(c)]
Let $\ff$ be an algebraically closed field with a fixed nonidentity involution. Then ${\mc A}$ and ${\mc B}$ are weakly *symmetrically equivalent if and only if
there exists a $t$-tuple $\mc C$ of $(p,n-p)$ block diagonal matrices such that $\mc A$ is *congruent to $\mc C$ and $(I_p\oplus -I_{n-p})\mc C$
is weakly *congruent to ${\mc B}$.
\hfill\hbox{\qedsymbol}
\end{itemize}
\end{corollary}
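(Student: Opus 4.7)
The plan is to derive the corollary from Theorem \ref{t1h} via the same three-step template already sketched, for the *symmetric case, in the paragraph immediately preceding the statement: unpack Definition \ref{nmj}, apply the appropriate part of Theorem \ref{t1h} to upgrade symmetric equivalence to congruence (in (b) and (c) this passes through an intermediate $(p,n-p)$ block diagonal tuple), and then repack the conclusion into weak congruence using Definition \ref{nmj} in reverse.

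For part (a), I would assume that $\mc A$ and $\mc B$ are weakly symmetrically equivalent, so that Definition \ref{nmj} supplies a nonsingular $\Lambda=[\lambda_{ij}]$ for which $\mc A$ is symmetrically equivalent to the tuple $\mc C$ of \eqref{mjg}. Theorem \ref{t1h}(a) immediately upgrades this to congruence of $\mc A$ and $\mc C$, and with the same $\Lambda$ this is, by Definition \ref{nmj}, weak congruence of $\mc A$ to $\mc B$. The converse is trivial since any congruence is a symmetric equivalence (take $R=S$).

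For part (b), the forward direction proceeds identically at first: Definition \ref{nmj} supplies $\Lambda$ and a tuple $\mc C$ of the form \eqref{mjg} to which $\mc A$ is symmetrically equivalent. Theorem \ref{t1h}(b) then produces a $t$-tuple $\mc D$ of $(p,n-p)$ block diagonal matrices with $\mc A$ congruent to $\mc D$ and $(I_p\oplus -I_{n-p})\mc D$ congruent to $\mc C$. Since $\mc C$ is precisely the $\Lambda$-image of $\mc B$ from \eqref{mjg}, this second relation unpacks to weak congruence of $(I_p\oplus -I_{n-p})\mc D$ to $\mc B$, as required. For the converse, starting from such a $\mc D$ with $(I_p\oplus -I_{n-p})\mc D$ weakly congruent to $\mc B$, I would extract from Definition \ref{nmj} the $\Lambda$ and the associated $\mc C$, and then invoke the ``if'' half of Theorem \ref{t1h}(b) to obtain symmetric equivalence of $\mc A$ to $\mc C$, which is weak symmetric equivalence of $\mc A$ and $\mc B$.

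Part (c) will be entirely parallel to (b), with Theorem \ref{t1h}(c) replacing (b) and each transpose replaced by the $*$-operation. The only point that really needs checking -- and essentially the sole potential obstacle -- is that weak (*-)congruence decomposes cleanly as ``first apply the linear substitution $\Lambda$ to $\mc B$, then apply a (*-)congruence'', so that the block diagonal tuple $\mc D$ produced by Theorem \ref{t1h} can be bolted onto the $\Lambda$ coming from Definition \ref{nmj} without interference. This is immediate from Definition \ref{nmj} itself and requires no new ideas beyond those in Theorem \ref{t1h}.
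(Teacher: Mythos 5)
Your proposal is correct and follows exactly the paper's route: the authors prove the corollary in the paragraph preceding its statement by unpacking Definition \ref{nmj} to obtain the tuple $\mc C$ of \eqref{mjg}, applying the relevant part of Theorem \ref{t1h} to the pair $(\mc A,\mc C)$, and repacking the conclusion as weak (*)congruence. Your additional checks (the trivial converse in (a) via $R=S$, and the compatibility of the $\Lambda$-substitution with the intermediate block diagonal tuple in (b) and (c)) are exactly the details the paper leaves implicit.
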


\begin{remark}\label{co1d}
The word ``symmetrically'' in (a) and (b) of Theorem \ref{t1h} and Corollary \ref{t1n1} can be deleted if the matrices in $\mc A$ and $\mc B$ are all symmetric or all skew-symmetric. The word ``*symmetrically'' in (c) can be deleted if the matrices in $\mc A$ and $\mc B$ are all Hermitian.
\end{remark}

The problem of classifying matrix $t$-tuples up to weak congruence arises in many branches of mathematics:
\begin{itemize}
  \item In multilinear algebra: if $U$ and $V$ are vector spaces with bases $e_1,\dots,e_n$ and $f_1,\dots,f_t$, then each bilinear map $\mc F:U\times U\to V$ is given by a $t$-tuple $(A_1,\dots,A_t)$ of $n\times n$ matrices as follows:
\[
\mc F(x,y)=([x]_e^TA_1[y]_e)f_1
+\dots+ ([x]_e^TA_t[y]_e)f_t
\]
in which $[x]_e$ and $[y]_e$ are the coordinate vectors of $x,y\in U$. We can reduce $(A_1,\dots,A_t)$ by transformations of weak congruence changing the bases in $U$ and $V$.

  \item In the theory of tensors:  by \cite{fut1}, the problem of classifying matrix $t$-tuples up to weak equivalence contains the problem of classifying
an arbitrary system of tensors of order at most three.

  \item In the theory of groups: each finite $p$-group $G$ of exponent $p\ne 2$ with central commutator subgroup $G'$ is given by a skew-symmetric bilinear map
      \begin{align*}
G/G' \times G/G' &\to G'\\ (g_1G',g_2G')&\mapsto [g_1, g_2]:=g_1^{-1}g_2^{-1}g_1g_2
      \end{align*}
between the vector spaces $G/G'$ and $G'$  over the field $\ff_p$ with $p$ elements, and so $G$ is given by a tuple of skew-symmetric matrices over $\ff_p$ defined up to weak congruence (see \cite[Lemma 5.2]{bel-dm} and \cite{ser_metab}). The problem of classifying nilpotent Chernikov $p$-groups with elementary top also reduces to the problem of classifying tuples of skew-symmetric matrices over $\ff_p$ up to weak congruence; see \cite{dro}.

\item The problem of classifying local commutative algebras (respectively, Lie algebras with central commutator subalgebra) over a field of characteristic different from 2 contains the problem of classifying,   up to weak congruence, of matrix $t$-tuples in which all matrices are symmetric (respectively, skew-symmetric); see \cite[Section 4]{bel-dm} and \cite{bel-lip,fut}.
\end{itemize}

Some classes of matrix 2-tuples are classified up to weak equivalence in \cite{bel-ber1,bel-ber,ehr}.
By \cite{bel-dm,bel-lip}, the problem of classifying matrix 3-tuples up to weak equivalence is wild, and so it contains the problems of classifying each system of linear maps and  representations of each finite dimensional algebra; see \cite{bel-ser_compl} and \cite[Proposition 9.14]{bar}.

\subsection{Matrix spaces up to congruence and *congruence}
\label{s1.1}

\begin{definition}\label{nhz}
Let $\ff$ be a field. Two matrix vector spaces $\mc V,\mc W\subset\ff^{n\times n}$ over $\ff$ are
\emph{equivalent} if
$S\mc VR=\mc W$,
\emph{congruent} if
$S\mc VS^T=\mc W$, and
\emph{*congruent}  with respect to an involution $a\mapsto \bar a$ on $\ff$ if
$S\mc VS^*=\mc W$
for some nonsingular $S$ and $R$.
\end{definition}

\begin{corollary}[of Theorem \ref{t1h}]
\label{t1}
Let $\mc V$ and $\mc W$ be two vector spaces of $n\times n$ matrices over $\mathbb F$.

\begin{itemize}
  \item[\rm(a)]
Let $\ff$ be an algebraically closed field of characteristic different from $2$. Let the matrices of $\mc V$ and $\mc W$ be all symmetric or all skew-symmetric. Then ${\mc V}$ and ${\mc W}$ are equivalent if and only if they are congruent.

  \item[\rm(b)]
Let $\ff$ be a real closed field. Let the matrices of $\mc V$ and $\mc W$ be all symmetric or all skew-symmetric. Then ${\mc V}$ and ${\mc W}$ are equivalent if and only if there exists a space $\mc U$ of $(p,n-p)$ block diagonal matrices such that $\mc V$ is congruent to $\mc U$ and $(I_p\oplus -I_{n-p})\mc U$
is congruent to ${\mc W}$.

\item[\rm(c)]
Let $\ff$ be an algebraically closed field with a fixed nonidentity involution. Let all matrices of $\mc V$ and $\mc W$ be Hermitian. Then ${\mc V}$ and ${\mc W}$ are equivalent if and only if
there exists a space $\mc U$ of $(p,n-p)$ block diagonal Hermitian matrices such that $\mc V$ is *congruent to $\mc U$ and $(I_p\oplus -I_{n-p})\mc U$
is *congruent to ${\mc W}$.
\end{itemize}
\end{corollary}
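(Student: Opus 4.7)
The plan is to reduce Corollary \ref{t1} to Theorem \ref{t1h} by replacing each matrix space with a tuple of basis elements. The reverse implications are immediate in every case: congruence implies equivalence, and left multiplication by the invertible matrix $I_p\oplus -I_{n-p}$ makes $\mc U$ equivalent to $(I_p\oplus -I_{n-p})\mc U$, so transitivity takes care of the rest.

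For the forward direction in cases (a) and (b), I would start from $\mc V$ and $\mc W$ consisting of symmetric matrices and some nonsingular $S,R$ with $S\mc V R=\mc W$. Transposing elementwise and using $\mc V^T=\mc V$ and $\mc W^T=\mc W$ yields $R^T\mc V S^T=\mc W$; writing $P:=R^T$, this becomes
\[
S\mc V P^T=P\mc V S^T=\mc W.
\]
The skew-symmetric sub-case produces the same identity because a pair of sign changes cancels, and in case (c) the analogous manipulation with $*$ in place of $T$ gives $S\mc V P^*=P\mc V S^*=\mc W$ with $P:=R^*$. Next I would fix a basis $A_1,\dots,A_t$ of $\mc V$ and set $B_i:=SA_iP^T$ (respectively $SA_iP^*$); the symmetry (respectively Hermiticity) of $A_i$ together with that of $B_i\in\mc W$ forces $B_i=PA_iS^T$ (respectively $PA_iS^*$). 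Hence $\mc A:=(A_1,\dots,A_t)$ and $\mc B:=(B_1,\dots,B_t)$ are symmetrically (respectively, *symmetrically) equivalent in the sense of Definition \ref{nhj}.

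At this point Theorem \ref{t1h} takes over. In case (a) it supplies a nonsingular $T$ with $TA_iT^T=B_i$ for all $i$; extending by linearity gives $T\mc V T^T=\mc W$. In case (b) it produces a tuple $\mc C$ of $(p,n-p)$ block diagonal matrices such that $\mc A$ is congruent to $\mc C$ and $(I_p\oplus -I_{n-p})\mc C$ is congruent to $\mc B$. I would take $\mc U$ to be the linear span of $\mc C$; since congruence commutes with linear combinations and preserves symmetry, $\mc U$ consists of symmetric block diagonal matrices and the two required congruences of spaces follow directly from their tuple versions. Case (c) is entirely analogous via Theorem \ref{t1h}(c), with $T(\cdot)T^*$ preserving Hermiticity and block diagonality.

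The only non-routine step is the symmetry trick in the second paragraph, which converts the bare equivalence $S\mc V R=\mc W$ of spaces into the double identity $S\mc A P^T=P\mc A S^T=\mc B$ of tuples; this is exactly where the hypothesis that $\mc V$ and $\mc W$ share a uniform symmetry type is used. Everything else is a bookkeeping reduction, and once that trick is in place, Theorem \ref{t1h} does all the real work.
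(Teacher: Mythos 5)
Your proposal is correct and takes essentially the same route as the paper: the paper likewise fixes a basis $A_1,\dots,A_t$ of $\mc V$, passes to the tuples $(A_i)$ and $(SA_iR)$, and invokes Theorem \ref{t1h}, with your transpose/involution trick for upgrading plain equivalence to symmetric (resp.\ *symmetric) equivalence being precisely the content of the paper's Remark \ref{co1d}, which it cites rather than spells out. The only difference is presentational: the paper proves (a) and declares (b), (c) analogous, whereas you carry out the span-of-$\mc C$ bookkeeping explicitly.
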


\begin{proof}
Let us prove (a), the statements (b) and (c) are proved analogously.
Let $\mc V$ and $\mc W$ be equivalent; that is,
$S\mc VR=\mc W$ for some nonsingular matrices $S$ and $R$. Let $A_1,\dots,A_t$ be a basis of $\mc V$.
Then the matrices
\[
B_1:=SA_1R,\ \dots,\ B_t:=SA_tR
\]
form a basis of $\mc W$. The tuples $(A_1,\dots,A_t)$ and $(B_1,\dots,B_t)$ are equivalent, and so they are congruent by Remark \ref{co1d}. Hence, $\mc V$ and $\mc W$ are congruent.
\end{proof}

\subsection{Multilinear maps}
\label{s1.3}

\begin{definition}
Two $k$-linear maps
\begin{equation}\label{8ut}
\mc F: U\times\dots\times U\to V,\qquad\mc G: U'\times\dots\times U'\to V'
\end{equation}
with $k\ge 2$ over a field $\ff$ are
\begin{itemize}
  \item \emph{equivalent} if
there exist linear bijections $\varphi_1,\dots,\varphi_k:U\to U'$ and
$\psi:V\to V'$ such that
\begin{equation}\label{jnd7}
\forall u_1,\dots,u_k\in U:\ \mc G(\varphi_1u_1,
\dots,\varphi_ku_k )=\psi \mc F(u_1,\dots,u_k),
\end{equation}
which means that the diagram
\[
\xymatrix@C=30pt{
U\ar[d]_{\varphi_1}
\ar@{}[r]|{\hspace{-25pt}
{\displaystyle\times\dots\times}
\hspace{-25pt}}
&U\ar[d]_{\varphi_k}
\ar[rr]^{\mc F}&&V
\ar[d]_{\psi}
                           \\
U'\ar@{}[r]|{\hspace{-25pt}
{\displaystyle\times\dots\times}
\hspace{-25pt}}
&U'\ar[rr]^{\mc G}&&V'
}
\]
is commutative;

  \item
\emph{symmetrically equivalent}
if there exist linear bijections $\varphi_1,\dots,\varphi_k:U\to U'$ and
$\psi:V\to V'$ such that
\begin{equation}\label{jnd}
\forall \sigma \in S_k,\, u_1,\dots,u_k\in U:\ \mc G(\varphi_{\sigma(1)}u_1,
\dots,\varphi_{\sigma(k)}u_k )=\psi \mc F(u_1,\dots,u_k),
\end{equation}
in which $S_k$ is the permutation group;

  \item \emph{congruent} if
there exist linear bijections $\varphi:U\to U'$ and
$\psi:V\to V'$ such that
\begin{equation}\label{jnd8}
\forall u_1,\dots,u_k\in U:\ \mc G(\varphi u_1,
\dots,\varphi u_k )=\psi \mc F(u_1,\dots,u_k).
\end{equation}
\end{itemize}
\end{definition}
The \emph{direct sum} of  $k$-linear maps
\eqref{8ut} is the $k$-linear map
\begin{align*}
\mc F\oplus \mc G: (U\oplus U')\times\dots\times (U
\oplus U')&\to V\oplus V'\\
(u_1+u'_1,\dots,u_k+u'_k)
&\mapsto\mc F(u_1,\dots,u_k)+
\mc G(u'_1,\dots,u'_k).
\end{align*}
A $k$-linear map $\mc F: U\times\dots\times U\to V$ with $k
 \ge 2$ is
\emph{symmetric} (respectively, \emph{skew-symmetric}) if
\begin{equation*}\label{7yt}
\mc F(u_1,\dots,u_{i-1},
u_j,u_{i+1},\dots,
u_{j-1},
u_i,u_{j+1},\dots,u_k)
\end{equation*}
is equal to $\mc F(u_1,\dots,u_k)$ (respectively, $-\mc F(u_1,\dots,u_k)$)
for all $i,j\in\{1,\dots,k\}$ with $i<j$ and all $u_1,\dots,u_k\in U$.

The following theorem  generalizes \cite[Theorem 2]{bel-ser_mul}
about $k$-linear forms (i.e., with $\ff$ instead of $V$ and $V'$).

\begin{theorem}[proved in Section \ref{s4}]\label{ther}
Let
\[
\mc F: U\times\dots\times U\to V,\qquad\mc G: U'\times\dots\times U'\to V'
\]
be two $k$-linear maps with $k\ge 2$ over $\ff$.

\begin{itemize}
  \item[\rm(a)] Let $\ff$ be an algebraically closed field of characteristic different from $2$. Then
$\mc F$ and $\mc G$
over $\ff$ are symmetrically
equivalent if and only if they are  congruent. In particular,
if $\mc F$ and $\mc G$ are both symmetric or both skew-symmetric, then they are equivalent if and only if they are congruent.

  \item[\rm(b)]
Let $\ff$ be a real closed field. Then $\mc F$ and $\mc G$ are symmetrically equivalent if and only if there exists a
$k$-linear map $\mc H\oplus \mc K$  such that
$\mc F$ is congruent to
$\mc H\oplus \mc K$ and $\mc H\oplus -\mc K$
is congruent to $\mc G$.

\end{itemize}
\end{theorem}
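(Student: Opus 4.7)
My plan is to proceed by induction on $k\geq 2$. The base case $k=2$ reduces directly to Theorem \ref{t1h} in the manner of Remark \ref{bvc}, while the inductive step $k\geq 3$ reduces a $k$-linear problem to a $(k-1)$-linear one by currying the last variable.

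For the base case $k=2$, I would fix bases of $U, U', V, V'$ and represent $\mc F, \mc G$ as tuples of $n\times n$ matrices $(A_1,\dots,A_s)$ and $(B_1,\dots,B_s)$, where $n=\dim U=\dim U'$, $s=\dim V=\dim V'$, and $A_j, B_j$ hold the $j$-th coordinates of $\mc F, \mc G$ in the chosen bases. A direct computation shows that the symmetric equivalence of $\mc F, \mc G$ via $(\varphi_1,\varphi_2,\psi)$ with matrices $(P_1,P_2,Q)$ translates to the condition that $(B_1,\dots,B_s)$ and the $Q$-recombination $(\sum_j Q_{ij}A_j)_i$ are symmetrically equivalent matrix tuples via $(P_1^T,P_2^T)$ in the sense of Definition \ref{nhj}. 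Theorem \ref{t1h}(a) (or (b) in the real closed case) then produces a nonsingular $T$ with $TB_iT^T=\sum_j Q_{ij}A_j$; taking $\varphi$ to have matrix $T^T$ and $\psi$ to have matrix $Q$ verifies the congruence $\mc G(\varphi u_1,\varphi u_2)=\psi\mc F(u_1,u_2)$.

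For the inductive step $k\geq 3$, I would view $\mc F$ as the $(k-1)$-linear map $\mc F^\sharp : U^{k-1}\to\operatorname{Hom}(U,V)$ defined by $\mc F^\sharp(u_1,\dots,u_{k-1})(u_k):=\mc F(u_1,\dots,u_k)$, and analogously $\mc G^\sharp$. Restricting the symmetric equivalence condition to those $\sigma\in S_k$ fixing $k$ (i.e.\ $\sigma\in S_{k-1}$) yields symmetric equivalence of $\mc F^\sharp, \mc G^\sharp$ as $(k-1)$-linear maps via $(\varphi_1,\dots,\varphi_{k-1};\Psi)$, where $\Psi:\operatorname{Hom}(U,V)\to\operatorname{Hom}(U',V')$ is given by $\Psi(T)(v'):=\psi\, T(\varphi_k^{-1}v')$. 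Applying the induction hypothesis produces a linear bijection $\varphi:U\to U'$ and a linear bijection $\Psi':\operatorname{Hom}(U,V)\to\operatorname{Hom}(U',V')$ satisfying $\mc G^\sharp(\varphi u_1,\dots,\varphi u_{k-1})=\Psi'\mc F^\sharp(u_1,\dots,u_{k-1})$ for all $u_l\in U$.

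The main obstacle will be upgrading this to a genuine congruence of $\mc F$ and $\mc G$: this requires $\Psi'$ to have the factored form $\Psi'(T)=\psi'\circ T\circ\varphi^{-1}$ for some linear bijection $\psi':V\to V'$ (with the same $\varphi$), which is not automatic from a generic application of the induction hypothesis. To force this structure, I would exploit the additional equivalences of $\mc F^\sharp, \mc G^\sharp$ that arise from permutations $\sigma\in S_k$ not fixing $k$: for each $m\in\{1,\dots,k\}$ the full symmetry supplies an analogous equivalence in which the $k$-th-slot map has the form $\Psi_m(T)(v'):=\psi\, T(\varphi_m^{-1}v')$, and the resulting overdetermined system of equivalences, together with the rigidity of matrix-tuple congruence supplied by Theorem \ref{t1h}, should pin $\Psi'$ down to the required factored form. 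Part (b) follows by the same inductive scheme invoking Theorem \ref{t1h}(b) instead of (a) at the base case; the $(p,n-p)$-block structure of the obstruction propagates through each inductive step to yield the direct-sum decomposition $\mc H\oplus\mc K$.
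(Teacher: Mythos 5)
Your base case $k=2$ is essentially Remark \ref{bvc} run in reverse and is fine, but the inductive step has a genuine gap --- precisely the one you flag yourself and then wave away. After currying, the induction hypothesis applied to $\mc F^\sharp,\mc G^\sharp\colon U^{k-1}\to\operatorname{Hom}(U,V)$ asserts only the \emph{existence} of some congruence $(\varphi,\Psi')$; it gives no control over the codomain bijection $\Psi'\colon\operatorname{Hom}(U,V)\to\operatorname{Hom}(U',V')$, and in particular no reason for $\Psi'$ to factor as $T\mapsto\psi'\circ T\circ\varphi^{-1}$ with the \emph{same} $\varphi$. The ``rigidity'' you invoke does not exist: Theorem \ref{t1h} is a pure existence statement, and the set of matrices realizing a congruence of tuples is a coset of the (generally large) isometry group of the tuple, so the overdetermined system of equivalences coming from permutations that move the $k$-th slot does not pin $\Psi'$ down. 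Even if you could force the factored form $\Psi'(T)=\psi'\circ T\circ\chi^{-1}$ for some $\chi\colon U\to U'$, you would only have reached $\mc G(\varphi u_1,\dots,\varphi u_{k-1},\chi u_k)=\psi'\mc F(u_1,\dots,u_k)$, i.e.\ $k-1$ of the $k$ slot maps equalized; the step from $k-1$ equal maps to $k$ equal maps is exactly the hard part, and your induction on $k$ never addresses it.

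The paper supplies the missing mechanism in two moves that your scheme lacks. First it removes the codomain entirely: choosing a basis $f_1,\dots,f_t$ of $V$ and taking $f'_i:=\psi f_i$ as the basis of $V'$ turns $\mc F,\mc G$ into $t$-tuples of scalar-valued $k$-linear forms that are symmetrically equivalent with no codomain map left over (Theorem \ref{prop}); this is cleaner than currying into $\operatorname{Hom}(U,V)$, which keeps an uncontrolled codomain in play. Second, instead of inducting on $k$, it inducts on the number of already-equal $\varphi_i$: if $\varphi_1=\dots=\varphi_r\ne\varphi_{r+1}$, then $\tau:=\varphi_r\varphi_{r+1}^{-1}$ is $\mc G_\ell$-selfadjoint for every $\ell$, and on each generalized eigenspace one extracts a polynomial $(r+1)$-th root $\rho$ of $\tau^{-1}$ (this is where algebraic closedness enters, via \cite[Lemma 4(a)]{bel-ser_mul}); setting $\psi_1=\dots=\psi_{r+1}:=\rho\varphi_r$ increases the number of equal maps by one while preserving symmetric equivalence. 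Your proposal contains no analogue of this root-extraction step, and without it the argument cannot be closed. The same objection applies to (b): the propagation of the $\mc H\oplus\mc K$ decomposition through the curried induction is asserted, not proved, whereas the paper obtains it from the real-closed analogue of the same selfadjoint-root argument.
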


\section{Proof of Theorem \ref{t1h}}
\label{s2}

We prove Theorem \ref{t1h} by the method that is developed in
\cite{ser}, described in details in \cite{ser1}, and is used in \cite{h-s2,hor-ser} and other articles. The reader is expected to be familiar with it. In order to better understand the nature of Theorem \ref{t1h},  in the next section we also give a direct proof of the statement (a).

The method developed in
\cite{ser} reduces the
problem of classifying systems of linear
maps and forms to the problem of
classifying systems of linear maps. Bilinear and sesquilinear forms, pairs of
symmetric, skew-symmetric, and Hermitian
forms, unitary and selfadjoint operators on
a vector space with indefinite scalar product
are classified in \cite{ser} over a field $\mathbb F$ of characteristic $\ne 2$ up to
classification of Hermitian forms over
finite extensions of $\mathbb F$ (and so they are fully classified over $\rr$ and
$\cc$).

Let $\ff$ be a field or skew field with a fixed involution $a\mapsto\bar a$. Each finite system consisting of vector spaces over $\ff$, and linear maps and sesquilinear (bilinear if $a\mapsto\bar a$ is the identity) forms between these spaces is considered in \cite{ser} as a representation of a graph with undirected and directed edges. Its vertices represent vector spaces, its undirected edges represent forms, and its directed edges represent linear maps. We give all forms and linear maps by their matrices if all the vector spaces are of the form $\ff^k$ with $k\in\{0,1,2,\dots\}$.

Let $\mc A=(A_1,\dots,A_t)$ and $\mc B=(B_1,\dots,B_t)$ be two
$t$-tuples
of $n\times n$ matrices over $\mathbb F$. Consider the corresponding representations \begin{equation}\label{ccn3}
\begin{split}
\xymatrix{{}\\{\mc A}:}\ \ \xymatrix{&{}\\
{\ff^n} \ar@{}[ur]|{\displaystyle \ddots}
\ar@{-}@(dl,l)^{A_1}
\ar@{-}@(l,ul)^{A_2}
\ar@{-}@(ul,u)^{A_3}
\ar@{-}@(r,rd)^{A_t}&{}
}
\qquad\qquad\qquad
\xymatrix{{}\\{\mc B}:}\ \ \xymatrix{&{}\\
{\ff^n} \ar@{}[ur]|{\displaystyle \ddots}
\ar@{-}@(dl,l)^{B_1}
\ar@{-}@(l,ul)^{B_2}
\ar@{-}@(ul,u)^{B_3}
\ar@{-}@(r,rd)^{B_t}&{}
}
\end{split}
\end{equation}
of the graph
\begin{equation*}\label{yyy3}
\begin{split}
\xymatrix{{}\\G:}\ \ \xymatrix{&{}\\
u \ar@{}[ur]|{\displaystyle \ddots}
\ar@{-}@(dl,l)^{\alpha_1}
\ar@{-}@(l,ul)^{\alpha_2}
\ar@{-}@(ul,u)^{\alpha_3}
\ar@{-}@(r,rd)^{\alpha_t}&{}
}\end{split}
\end{equation*}
with $t$ undirected loops. All matrices $A_i$ and $B_i$ in \eqref{ccn3} represent sesquilinear (bilinear if $a\mapsto\bar a$ is the identity) forms on the vector space $\ff^n$. The representations \eqref{ccn3}
are isomorphic if there exists a nonsingular matrix $S\in\ff^{n\times n}$ such that
\begin{equation*}\label{ewm3}
S^{\st}(A_1,\dots,A_t)S
=(B_1,\dots,B_t)
\end{equation*}
in which $S^{\st}:=S^T$ if $a\mapsto\bar a$ is the identity, and  $S^{\st}:=S^*$ if $a\mapsto\bar a$ is nonidentity.

The representations \eqref{ccn3} define the representations
\begin{equation}\label{6yr}
\un{\mc A}:\
\xymatrix@C=55pt{\ff^n
 \ar@/_/[r]_{A_1^{\st}}
 \ar@/_30pt/[r]_{A_t^{\st}}^{\cdots}
 \ar@/^/[r]^{A_1}
\ar@/^25pt/[r]^{A_t}_{\ldots}
&\ff^n}
\qquad\qquad\qquad
\un{\mc B}:\
\xymatrix@C=55pt{\ff^n
 \ar@/_/[r]_{B_1^{\st}}
 \ar@/_30pt/[r]_{B_t^{\st}}^{\cdots}
 \ar@/^/[r]^{B_1}
\ar@/^25pt/[r]^{B_t}_{\ldots}
&\ff^n}
\end{equation}
of the quiver with involution
\begin{equation*}\label{yyd}
\un G:\ \xymatrix@C=55pt{u
 \ar@/_/[r]_{\alpha_1^*}
 \ar@/_30pt/[r]_{\alpha_t^*}^{\cdots}
 \ar@/^/[r]^{\alpha_1}
\ar@/^25pt/[r]^{\alpha_t}_{\ldots}
&u^*}
\end{equation*}

Applying \cite[Theorem 1]{ser} (or \cite[Theorem 3.1]{ser1}) to representations of $G$ and $\un G$, we obtain the following:
\begin{itemize}
  \item[(A)] If\/ $\ff$ is an algebraically closed field of characteristic different from $2$, then $\mc A$ is isomorphic to $\mc B$ if and only if $\un{\mc A}$ is isomorphic to $\un{\mc B}$.

  \item[(B)] Let $\ff$ be an algebraically closed field with a fixed nonidentity involution, or a real closed field, or the skew field of quaternions over $\rr$ with a fixed nonidentity involution.
      \begin{itemize}
        \item
        %[(B1)]
        Each representation of $G$ is uniquely, up to isomorphisms of summands, decomposes into a direct sum of indecomposable representations.

        \item%[(B2)]
        If $\mc A$ is an indecomposable representation of $G$ and $\un{\mc A}$ is isomorphic to $\un{\mc B}$, then ${\mc A}$ is isomorphic to $\mc B$ or $-\mc B$.
      \end{itemize}
\end{itemize}

By the statement (B), if $\un{\mc A}$ is isomorphic to $\un{\mc B}$ and $\mc A$ is isomorphic to a direct sum $\mc A_1\oplus\dots\oplus \mc A_r$ of indecomposable representations, then the direct summands can be renumbered such that $\mc B$ is isomorphic to
$\mc A_1\oplus\dots\oplus\mc A_s \oplus-\mc A_{s+1}\oplus\dots\oplus-\mc A_r$,
in which $0\le s\le r$.

The representations \eqref{6yr} are isomorphic if there exist nonsingular matrices $P$ and $Q$ such that
\[
P(A_1,\dots,A_t,A_1^{\st}, \dots,A_t^{\st})Q=
(B_1,\dots,B_t,B_1^{\st}, \dots,B_t^{\st});
\]
that is, $P\mc A Q=\mc B$ and
$P\mc A^{\st} Q=\mc B^{\st}$.
By the last equality, $Q^{\st} \mc AP^{\st} =\mc B$. If $R:=Q^{\st}$, then
$P\mc A R^{\st}=R\mc AP^{\st} =\mc B$, which means that the matrix $t$-tuples $\mc A$ and $\mc B$ are symmetrically equivalent if the involution $a\mapsto\bar a$ is the identity, and they are symmetrically *equivalent if $a\mapsto\bar a$ is nonidentity.
Therefore, Theorem \ref{t1h} follows from (A) and (B).

\section{Direct proof of Theorem \ref{t1h}(a)}
\label{s3}

Let $\ff$ be an algebraically closed field of characteristic different from $2$. Suppose that two matrix $t$-tuples
$\mc A=(A_1,\dots,A_t)$ and $\mc B=(B_1,\dots,B_t)$ are symmetrically equivalent; that is, $P\mc AR^T=R\mc AP^T=\mc B$ for some nonsingular $P$ and $R$. Thus, $PA_iR^T=RA_iP^T=B_i$,
and so $P(A_i,A_i^T)R^T=(B_i,B_i^T)$ for each $i\in\{1,\dots,t\}$.

Let $Q:=(R^T)^{-1}$, then $P(A_i,A_i^T)=(B_i,B_i^T)Q$
and the diagram
\[
    \xymatrix@C=55pt@R=25pt{
{\circ}\ar@<-2pt>[d]_{A_i}
\ar@<2pt>[d]^{A^T_i}
\ar[r]^{Q}
&{\circ}\ar@<-2pt>[d]_{B_i}
\ar@<2pt>[d]^{B^T_i}
                      \\
{\circ}\ar[r]_{P}&{\circ}}
\]
is commutative.
Define the pair $(C_i,C_i^T)$ as follows:
\begin{equation}\label{asq}
\begin{split}
    \xymatrix@C=60pt@R=35pt{
{\circ}\ar@<-2pt>[d]_{A_i}
\ar@<2pt>[d]^{A^T_i}
\ar[r]_{Q}
\ar@/^1pc/@{-->}[rr]^{I_n}
&{\circ}\ar@<-2pt>[d]_{B_i}
\ar@<2pt>[d]^{B^T_i}
\ar[r]_{Q^{-1}}
&{\circ}\ar@<-2pt>[d]_{C_i}
\ar@<2pt>[d]^{C^T_i}
     &
\ar@{}[d]|{\qquad\qquad\displaystyle
\begin{aligned}
 C_i:=&Q^TB_iQ\\
 C_i^T\!=&Q^TB^T_iQ\\
 R:=&Q^TP
\end{aligned}
}
                      \\
{\circ}\ar[r]^{P}
\ar@/_1pc/@{-->}[rr]_{R}
&{\circ}
\ar[r]^{Q^T}&{\circ}
     &
}
\end{split}
\end{equation}
and obtain the commutative diagram
\begin{equation*}\label{jjj}
    \xymatrix@C=60pt@R=25pt{
{\circ}\ar@<-2pt>[d]_{A_i}
\ar@<2pt>[d]^{A^T_i}
\ar[r]^{I_n}
&{\circ}\ar@<-2pt>[d]_{C_i}
\ar@<2pt>[d]^{C^T_i}
\ar[r]^{R^T}
&{\circ}\ar@<-2pt>[d]_{A_i}
\ar@<2pt>[d]^{A^T_i}
                      \\
{\circ}\ar[r]_{R}
&{\circ}
\ar[r]_{I_n}&{\circ}}
\end{equation*}
Therefore,
\begin{equation*}\label{fff}
\begin{split}
    \xymatrix@C=60pt@R=25pt{
{\circ}\ar[d]_{A_i}
\ar[r]^{R^T}
&{\circ}\ar[d]^{A_i}
                      \\
{\circ}\ar[r]_{R}
&{\circ}}
              \qquad
\xymatrix@C=60pt@R=25pt
{{}\ar@{}[d]|
{\displaystyle\text{and so}}\\{}}
              \qquad
    \xymatrix@C=60pt@R=25pt{
{\circ}\ar[d]_{A_i}
\ar[r]^{f(R^T)}
&{\circ}\ar[d]^{A_i}
                      \\
{\circ}\ar[r]_{f(R)}
&{\circ}}
\end{split}
\end{equation*}
for each polynomial $f\in\mathbb F[x]$.

Since $A_iR^T=C_i$,
the diagram
\begin{equation*}\label{ddb}
\begin{split}
    \xymatrix@C=60pt@R=30pt{
{\circ}\ar[d]_{A_i}
\ar[r]_{f(R^T)}
\ar@/^1pc/@{-->}[rr]^{R^{-T}f(R^T)}
&{\circ}\ar[d]_{A_i}
\ar[r]_{R^{-T}}
&{\circ}\ar[d]^{C_i}
                      \\
{\circ}\ar[r]^{f(R)}
\ar@/_1pc/@{-->}[rr]_{f(R)}
&{\circ}
\ar[r]^{I_n}&{\circ}}
\end{split}
\end{equation*}
is commutative. Hence $f(R)A_i=C_i\bigl(R^{-T}f(R^T) \bigr)$ and
\[C_i=f(R)A_i
\bigl(R^{-T}f(R^T)\bigr)^{-1}
=f(R)A_i\bigl(Rf(R)^{-1}\bigr)^{T}.\]

By \cite[Section VIII, \S\,6]{gan} or \cite[Section 6.4]{horn}, there exists $f\in\mathbb F[x]$ such that
$f(R)^{2}=R$. Since $R$ is nonsingular, $f(R)$ is nonsingular too, $f(R)=Rf(R)^{-1}$, and $C_i=f(R)A_if(R)^{T}$. By \eqref{asq}, $B_i=Q^{-T}C_iQ^{-1}
=Q^{-T}f(R)A_if(R)^{T}Q^{-1}$. Taking $S:=Q^{-T}f(R)$, we obtain $B_i=SA_iS^T$. The matrix $S$ is the same for all $A_1,\dots,A_t$. Therefore, $\mc B=S\mc AS^T$, i.e. $\mc A$ is congruent to $\mc B$.

\section{Proof of Theorem \ref{ther}}
\label{s4}

\begin{definition}
Let $\mathscr  F:=(\mc F_1,\dots,\mc F_t)$ and $\mathscr  G:=(\mc G_1,\dots,\mc G_t)$ be two $t$-tuples of $k$-linear forms
\begin{equation}\label{gvt2}
\mc F_1,\dots,\mc F_t: U\times\dots\times U\to \ff,\qquad\mc G_1,\dots,\mc G_t: U'\times\dots\times U'\to \ff
\end{equation}
on vector spaces $U$ and $U'$ over a field $\ff$. We say that
$\mathscr  F $ and $\mathscr  G$ are
\begin{itemize}
  \item
\emph{symmetrically equivalent}
if there exist linear bijections $\varphi_1,\dots,\varphi_k:U\to U'$ such that
\begin{equation}\label{jndn}
\forall \sigma \in S_k,\, u_1,\dots,u_k\in U:\ \mc G_{\ell}(\varphi_{\sigma(1)}u_1,
\dots,\varphi_{\sigma(k)}u_k )= \mc F_{\ell}(u_1,\dots,u_k)
\end{equation}
for each $\ell=1,\dots,t$;

  \item
\emph{congruent} if
there exists a linear bijection $\varphi:U\to U'$ such that
\begin{equation}\label{jndj}
\forall u_1,\dots,u_k\in U:\ \mc G_{\ell}(\varphi u_1,
\dots,\varphi u_k )=\mc F_{\ell}(u_1,\dots,u_k)
\end{equation}
for each $\ell=1,\dots,t$.
\end{itemize}
\end{definition}

Let $\mc G: V\times\dots \times V\to \ff$ be a $k$-linear form. A linear map $\tau:
V\to V$ is \emph{$\mc G$-selfadjoint} if
\begin{equation*}\label{5}
\mc G(v_1\dots,v_{i-1}, \tau
v_i,v_{i+1}\dots,v_n)=
\mc G(v_1\dots,v_{j-1},\tau
v_j,v_{j+1}\dots,v_n)
\end{equation*}
for all $v_1,\dots,v_n\in V$ and all
$i,j$.

The proof of Theorem \ref{ther} is based on the following theorem, which is given in  \cite[Theorems 2(a) and 3]{bel-ser_mul} for $t=1$.

\begin{theorem}\label{prop}
Let $\mathscr  F=(\mc F_1,\dots,\mc F_t)$ and $\mathscr G=(\mc G_1,\dots,\mc G_t)$ be two $t$-tuples of $k$-linear forms \eqref{gvt2} over $\ff$.

\begin{itemize}
  \item[\rm(a)]
Let $\ff$ be an algebraically closed field of characteristic different from $2$. Then $\mathscr  F$ and $\mathscr  G$ are symmetrically equivalent  if and only if they are congruent.

\item[\rm(b)]
Let $\ff$ be a real closed field. Then $\mathscr  F$ and $\mathscr  G$ are symmetrically equivalent if and only if there exists a $t$-tuple $\mathscr  H\oplus \mathscr  K$  of $k$-linear forms such that
$\mathscr  F$ is congruent to
$\mathscr  H\oplus \mathscr  K$ and $\mathscr  H\oplus -\mathscr  K$
is congruent to $\mathscr  G$.
\end{itemize}
\end{theorem}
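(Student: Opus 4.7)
The plan is to leverage the case $t=1$ proved in \cite{bel-ser_mul} by observing that the congruence produced there is built purely from the symmetric-equivalence data $(\varphi_1,\dots,\varphi_k)$, so that a single choice of $\varphi$ serves every component $\mc F_\ell$ of the tuple simultaneously.

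First I would normalize the data. Using $\varphi_1$, I replace $\mc G_\ell$ by $\widetilde{\mc G}_\ell(u_1,\dots,u_k):=\mc G_\ell(\varphi_1 u_1,\dots,\varphi_1 u_k)$, reducing to the case $U=U'$ with $\varphi_1=\mathrm{id}_U$. Setting $\tau_i:=\varphi_1^{-1}\varphi_i$ (so $\tau_1=\mathrm{id}$), the hypothesis becomes
$$
\widetilde{\mc G}_\ell(\tau_{\sigma(1)}u_1,\dots,\tau_{\sigma(k)}u_k)=\mc F_\ell(u_1,\dots,u_k)\qquad\forall\,\sigma\in S_k,\ \ell,\ u_i.
$$
Comparing a transposition $\sigma=(1\,i)$ with the identity and cancelling common entries immediately yields, for every $\ell$, that $\tau_i$ is $\widetilde{\mc G}_\ell$-selfadjoint in positions $1$ and $i$; analogous identities for the other pairs govern all remaining slots.

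Next I would mimic the $t=1$ argument of \cite{bel-ser_mul}. In part (a), one builds a single operator $\varphi:U\to U$ from $\tau_2,\dots,\tau_k$ by taking polynomial expressions and square roots (which exist over an algebraically closed field of characteristic $\neq 2$ by \cite[Section VIII, \S\,6]{gan}) so that $\widetilde{\mc G}_1(\varphi u_1,\dots,\varphi u_k)=\mc F_1(u_1,\dots,u_k)$. The crucial point is that the construction of $\varphi$ depends only on $\tau_2,\dots,\tau_k$ (via their rational canonical form), not on the particular form $\widetilde{\mc G}_\ell$ or $\mc F_\ell$; this parallels the direct argument of Section \ref{s3}, where $S=Q^{-T}f(R)$ is built from $P,Q,R$ alone and hence works uniformly for every $A_i$ in the tuple. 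Since each $\tau_i$ is simultaneously $\widetilde{\mc G}_\ell$-selfadjoint for every $\ell$, the same $\varphi$ intertwines $\widetilde{\mc G}_\ell$ with $\mc F_\ell$ for all $\ell$, and composing with $\varphi_1$ produces the single congruence required in (a). For (b), the single-form proof over a real closed field produces a $\tau$-invariant decomposition $U=U_+\oplus U_-$ that splits $\mc F_1$ as $\mc H_1\oplus \mc K_1$ and exhibits $\mc G_1$ as congruent to $\mc H_1\oplus -\mc K_1$; once again the subspaces $U_\pm$ are determined solely by the canonical structure of $\tau_2,\dots,\tau_k$, so the same splitting decomposes every $\mc F_\ell$ as $\mc H_\ell\oplus \mc K_\ell$ and every $\mc G_\ell$ as congruent to $\mc H_\ell\oplus -\mc K_\ell$. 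Setting $\mathscr H:=(\mc H_1,\dots,\mc H_t)$ and $\mathscr K:=(\mc K_1,\dots,\mc K_t)$ yields the conclusion of (b).

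The main obstacle I expect is confirming carefully that the single-form construction of \cite{bel-ser_mul} is genuinely ``form-free''---that the congruence $\varphi$, and in (b) the splitting $U_+\oplus U_-$, depend only on $\tau_1,\dots,\tau_k$ and not on the particular $k$-linear form. Once this transparency of the $t=1$ proof is checked by inspection, the passage to $t$-tuples is automatic, in complete analogy with the way Theorem \ref{t1h} extends the single-matrix result \eqref{bkm} to tuples.
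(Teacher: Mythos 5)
Your proposal is correct and follows essentially the same route as the paper: the paper likewise proves the tuple version by rerunning the $t=1$ merging argument of \cite{bel-ser_mul}, noting that the operator $\tau=\varphi_r\varphi_{r+1}^{-1}$ is $\mc G_\ell$-selfadjoint for every $\ell$ simultaneously, so the block-polynomial root $\rho$ (and, in the real closed case, the resulting splitting) is built from $\tau$ alone and works uniformly across all components of the tuple. The ``form-free'' dependence you flag as the main obstacle is exactly the point the paper verifies explicitly via \cite[Lemmas 4(a) and 6]{bel-ser_mul}.
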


\begin{proof}
(a) Let us show that this statement can be proved as \cite[Theorem 2(a)]{bel-ser_mul}. Let $\mathscr  F$ and $\mathscr  G$ be symmetrically equivalent; that is, there exist linear bijections $\varphi_1,\dots,\varphi_k:U\to U'$ satisfying \eqref{jndn}. Let $\varphi_1=\dots=\varphi_r\ne \varphi_{r+1}$ for some $r<k$.
It suffices to prove that $\varphi_1,\dots,\varphi_k$ in \eqref{jndn} can be replaced by $\psi_1,\dots,\psi_k$ such that
\begin{equation}\label{bnu}
\psi_1=\dots=\psi_{r+1}.
\end{equation}
We have
\begin{equation}\label{4}
  \mc G_{\ell}(\dots,\varphi_r u_i,\dots,
  \varphi_{r+1}u_j,\dots)=
  \mc G_{\ell}(\dots,\varphi_{r+1}u _i,\dots,\varphi_r u_j,\dots)
\end{equation}
for all $\ell=1,\dots,t$, $1\le i<j\le k$, $u_i,u_j\in U$, and all elements
$v_1,\dots,v_{i-1},
v_{i+1},\dots,v_{j-1},
v_{j+1},\dots,v_k\in V$ that are represented by the points.
Write $v_i:=\varphi_{r+1}u_i$,
$v_j:=\varphi_{r+1}u_j$, and consider the linear map $
\tau:=\varphi_r\varphi_{r+1}^{-1}:
V\to V $. By
\eqref{4},
\begin{equation*}\label{ucd}
  \mc G_{\ell}(\dots,\tau v_i,\dots,
  v_j,\dots)=
  \mc G_{\ell}(\dots,v _i,\dots,\tau v_j,\dots).
\end{equation*}
Therefore, $\tau $ is {$\mc G_{\ell}$-selfadjoint} for each $\ell$.

Let $\lambda_1,\dots,\lambda_s$ be
all the distinct eigenvalues of $\tau$,
and let \[V=V_1\oplus\dots\oplus V_s,\qquad \tau_i:=\tau|V_i
\text{ has the single eigenvalue $\lambda_i$}
\] be the
decomposition of $V$ into the direct
sum of $\tau$-invariant subspaces. By \cite[Lemma 6]{bel-ser_mul},
\[
\mc G_{\ell}=\mc G_{\ell 1}\oplus\dots\oplus \mc G_{\ell s},\qquad
\mc G_{\ell i}:=\mc G_{\ell}|V_i
\]
for each $\ell=1,\dots, t$.
By \cite[Lemma
4(a)]{bel-ser_mul}, there exists $f_i(x)\in\ff[x]$ such
that
$f_i(\tau_i)^{r+1}=\tau_i^{-1}$.
Then
\[
\rho:=f_1(\tau_1)\oplus\dots\oplus
f_s(\tau_s):  V\to V
\]
is $\mc G_{\ell}$-selfadjoint and
$\rho^{r+1}=\tau^{-1}$.

Define the linear bijections
\begin{equation*}\label{6ab}
  \psi_1=\dots=\psi_{r+1}:=
  \rho\varphi_r,
\qquad \psi_{r+2}:=\varphi_{r+2},\dots,
\psi_k:=\varphi_k.
\end{equation*}
Since $\rho$ is $\mc G_{\ell}$-selfadjoint and
\[
 \rho^{r+1}\varphi_r
=\tau^{-1}\varphi_r =(\varphi_r
\varphi_{r+1}^{-1})^{-1} \varphi_r=
\varphi_{r+1},
\]
we have
\begin{align*}
\mc G_{\ell}(\psi_1u_1,\dots, \psi_nu_n)
 &=
\mc G_{\ell}(\rho\varphi_r u_1,\dots, \rho\varphi_r
u_r, \rho\varphi_r u_{r+1},
\varphi_{r+2}
u_{r+2},\dots,\varphi_k u_k)
 \\&=
\mc G_{\ell}(\varphi_r u_1,\dots, \varphi_r u_r,
\rho^{r+1}\varphi_r u_{r+1},
\varphi_{r+2}
u_{r+2},\dots,\varphi_k u_k)
 \\&=
\mc G_{\ell}(\varphi_1u_1,\dots, \varphi_k
u_k)= \mc F_{\ell}(u_1,\dots,u_k).
\end{align*}
The equality
$
\mc G_{\ell}(\psi_{\sigma(1)}u_1,
\dots,\psi_{\sigma(k)}u_k )= \mc F_{\ell}(u_1,\dots,u_k)
$ for an arbitrary $\sigma \in S_k$ is
proved analogously. Therefore, $\psi_1,\dots,\psi_k$ can be used instead of $\varphi_1,\dots,\varphi_k$ and \eqref{bnu} holds.

(b) This statement can be proved as \cite[Theorem 3]{bel-ser_mul}.
\end{proof}

\begin{proof}[Proof of Theorem \ref{ther}]
Let us prove (a); the statement (b) is proved analogously.

Let $k$-linear maps \[\mc F: U\times\dots\times U\to V,\qquad \mc G: U'\times\dots\times U'\to V'\] with $k\ge 2$ be
symmetrically equivalent; that is, \eqref{jnd} holds.
Let  $f_1,\dots,f_t$ be a basis of $V$. Then $f'_1:=\psi f_1$, \dots, $f'_t:=\psi f_t$ is a basis of $V'$. Let \[\mathscr  F:=(\mc F_1,\dots,\mc F_t),\qquad \mathscr G:=(\mc G_1,\dots,\mc G_t)\] be two $t$-tuples of $k$-linear forms  \eqref{gvt2} defined by
\begin{equation}
\label{bgq}
\begin{split}
\mc F(u_1,\dots,u_k)&=
\mc F_1(u_1,\dots,u_k)f_1 +\dots+\mc F_t(u_1,\dots,u_k)f_t
\\
\mc G(u'_1,\dots,u'_k)&=
\mc G_1(u'_1,\dots,u'_k)f'_1 +\dots+\mc G_t(u'_1,\dots,u'_k)f'_t
\end{split}
\end{equation}
for all $u_1,\dots,u_k\in U$ and $u'_1,\dots,u'_k\in U'$.
Substituting \eqref{bgq} in \eqref{jnd}, we obtain \eqref{jndn} for each $\ell=1,\dots,t$. Therefore, $\mathscr F$ and $\mathscr G$ are
symmetrically equivalent.
By Theorem \ref{prop},
they are congruent; that is, \eqref{jndj} holds for some linear bijection $\varphi:U\to U'$ and all $\ell=1,\dots,t$. By \eqref{bgq}, \eqref{jnd8} holds for this $\varphi$ and for $\psi$ from \eqref{jnd}. Hence, $\mc F$ and $\mc G$ are congruent.

In particular, if $\mc F$ and $\mc G$ are both symmetric or both skew-symmetric and they are equivalent, then they are congruent since $\mc F: U\times\dots\times U\to V$ is symmetric if
\[
\forall \sigma \in S_k,\, u_1,\dots,u_k\in U:\ \mc F(u_{\sigma(1)},
\dots,u_{\sigma(k)})= \mc F(u_1,\dots,u_k);
\]
it is skew-symmetric  if
\[
\forall \sigma \in S_k,\, u_1,\dots,u_k\in U:\ \mc F(u_{\sigma(1)},
\dots,u_{\sigma(k)})=\sign(\sigma) \mc F(u_1,\dots,u_k).
\]
For example, if $\mc F$ and $\mc G$ are skew-symmetric and \eqref{jnd7} holds, then
\eqref{jnd} holds too since
\begin{align*}
\mc G(\varphi_{\sigma(1)}u_1,
\dots,\varphi_{\sigma(k)}u_k )
     &=
\sign(\sigma^{-1})\mc
G(\varphi_1
u_{\sigma^{-1}(1)},\dots,
\varphi_k
u_{\sigma^{-1}(k)})
\\
&=
\sign(\sigma^{-1})\psi\mc
F(
u_{\sigma^{-1}(1)},\dots,
u_{\sigma^{-1}(k)})
\\
&=\psi \mc F(u_1,\dots,u_k)
\end{align*}
for each $\sigma \in S_k$ and all $u_1,\dots,u_k\in U$.
Thus, $\mc F$ and $\mc G$ are symmetrically equivalent, and so they are congruent.
\end{proof}

\begin{remark} \label{bvc}
Theorem \ref{ther} in the case of bilinear maps (i.e., for $k=2$) follows from Theorem \ref{t1h}.
Let us prove this fact for the case of an algebraically closed field $\ff$ of characteristic
different from 2.
Let bilinear maps \[\mc F: U\times U\to V,\qquad \mc G: U'\times U'\to V'\] over  $\ff$ be   symmetrically equivalent; that is,
\begin{equation}\label{nnh}
\forall x,y\in U:\ \mc G(\varphi_1x,\varphi_2y) =\mc G(\varphi_2x,\varphi_1y)=\psi
\mc F(x,y)
\end{equation}
for some linear bijections $\varphi_1,\varphi_2:U\to U'$ and $\psi:V\to V'$.

Choose bases $e_1,\dots,e_n$ in $U$ and $f_1,\dots,f_t$ in $V$.
Write
\[
\mc F(e_i,e_j)
=a_{ij}^{(1)}f_1
+\dots+a_{ij}^{(t)}f_t,\qquad
a_{ij}^{(1)},\dots, a_{ij}^{(t)}\in\ff,\] and
define the $n\times n$ matrices \[A_1:=[a_{ij}^{(1)}],\ \dots,\ A_t:=[a_{ij}^{(t)}].\] Then \[
\mc F(x,y)
=([x]_e^TA_1[y]_e)f_1
+\dots+([x]_e^TA_t[y]_e)f_t.
\]
Analogously,
\[
 \mc G(x,y)
=([x]_{e'}^TB_1[y]_{e'})f'_1
+\dots+([x]_{e'}^TB_t[y]_{e'})f'_t
\]
in some bases of $U'$ and $V'$.

Let us prove that the matrix $t$-tuples
 \begin{equation}\label{xxx}
 \mc A:=(A_1,
\dots,A_t),\qquad
\mc B:=(B_1,\dots,B_t)
\end{equation}
are weakly symmetrically equivalent.
Let $\Phi_1$, $\Phi_2$, and $\Psi=[\lambda_{ij}]_{i,j=1}^t$ be the matrices of $\varphi_1$, $\varphi_2$ and $\psi$ in these bases. By \eqref{nnh},
\[
\big([x]_{e}^T\Phi_1^TB_1\Phi_2[y]_{e},
\dots,[x]_{e}^T\Phi_1^TB_t
\Phi_2[y]_{e}\big)^T
=\Psi \big([x]_e^TA_1[y]_e,\dots,
[x]_e^TA_t[y]_e\big)^T.
\]
Hence,
$
[x]_{e}^T\Phi_1^T(B_1,
\dots,B_t)\Phi_2[y]_{e}
=[x]_{e}^T\widetilde{\Psi}[y]_{e}
$
where
\[
\widetilde{\Psi}:=
(\lambda_{11}A_1
+\dots+\lambda_{1t}A_t,\ \dots,\ \lambda_{t1}A_1
+\dots+\lambda_{tt}A_t).
\]
Thus, $\Phi_1^T(B_1,
\dots,B_t)\Phi_2
=\widetilde{\Psi}$.
By \eqref{nnh}, we can take $\varphi_2$ and $\varphi_1$ instead of $\varphi_1$ and $\varphi_2$, and obtain $\Phi_2^T(B_1,
\dots,B_t)\Phi_1
=\widetilde{\Psi}
$. Therefore, the $t$-tuples  \eqref{xxx} are weakly symmetrically equivalent. By Corollary \ref{t1n1}(a), they are weakly congruent; that is, we can take $\Phi_1=\Phi_2$. Then $\varphi_1=\varphi_2$, and so $\mc F$ and $\mc G$ are congruent.
\end{remark}

\section*{Acknowledgements}
V. Futorny was supported by the CNPq (304467/2017-0) and the FAPESP (2018/23690-6).  V.V.~Sergeichuk was
supported by FAPESP (2018/24089-4). The work was started when V.V.~Sergeichuk visited the Ben-Gurion University of the Negev.

\end{document}